\renewcommand{\geq}{\geqslant}
\renewcommand{\leq}{\leqslant}
\newtheorem{theorem}{Theorem}
\newtheorem{proposition}{Proposition}[section]
\newtheorem{lemma}[proposition]{Lemma}
\newtheorem*{main-theorem}{Main Theorem}
\newtheorem*{theorem*}{Theorem}
\theoremstyle{definition}
\newtheorem{remark}[proposition]{Remark}
\newtheorem*{remark*}{Remark}
\numberwithin{equation}{section}
\def\phi{\varphi}
\def\reals{{\mathbb R}}
\def\O{{\mathcal O}}
\def\phi{\varphi}
\def\be{\begin{eqnarray*}}
\def\ee{\end{eqnarray*}}
\def\ben{\begin{eqnarray}}
\def\een{\end{eqnarray}}
\def\L2R{L_{\text{Rest}}^2}
\def\11{\mathds{1}}
\def\L2c{L^2_{\text{comp}}}
\def\tDelta{\widetilde{\Delta}}
\def\tS{\widetilde{S}}
\def\Vol{\text{Vol}}
\def\p{\partial}
\def\bu{\bar{u}}
\def\bv{\bar{v}}
\begin{document}

\title[Neumann data on simplices]{Equidistribution of 
  Neumann data mass on simplices and a simple inverse problem}


\author[H. Christianson]{Hans Christianson}
\address[H. Christianson]{ Department of Mathematics, University of North Carolina.\medskip}
 \email{hans@math.unc.edu}



%
%
%
\begin{abstract}
  In this paper we study the behaviour of the Neumann data 
of Dirichlet eigenfunctions on simplices.   We prove that the  $L^2$
norm of the (semi-classical) Neumann data on each face is equal to
$2/n$ times the $(n-1)$-dimensional volume of the face divided by the
volume of the simplex.
This is a  generalization of \cite{Chr-tri} to higher
dimensions.  Again it is {\it not} an asymptotic, but an exact formula.
  The proof is by simple integrations by parts and 
   linear algebra.

  We also consider the following inverse problem: do the {\it norms}
  of the Neumann data on a simplex determine a constant coefficient elliptic
  operator?  The answer is yes in dimension 2 and no in higher dimensions.

\end{abstract}

\maketitle

\section{Introduction}
In this paper we extend the results of \cite{Chr-tri} on triangles to
simplices, which are the higher dimensional analogues of triangles.
The proof has many similarities but involves more linear algebra and
elementary geometry.   We have chosen to separate the two proofs in
order to make the paper about triangles simple and clean.  We also
have added to this paper some applications to rudimentary inverse problems.  

Let $T \subset \reals^n$ be an $n$ dimensional (non-degenerate) simplex with faces
$G_0, \ldots G_{n}$.  We consider the Dirichlet eigenfunction
problem on $T$:
\begin{equation}
  \label{E:ef}
  \begin{cases}
    -h^2 \Delta u = u \text{ in }T, \\
    u|_{\p T} = 0.
  \end{cases}
  \end{equation}
The semiclassical parameter $h>0$ denotes the (inverse of) the
eigenvalues hence takes values in a discrete set.  We assume that the
eigenfunctions are normalized: $\| u \|_{L^2 (T) } = 1$.  Our main
result, similarly to in \cite{Chr-tri} is that the Neumann data on
each face of the simplex is proportional to the volume of the face.



\begin{theorem}
  \label{T:main}
  Let $T \subset \reals^n$ be a non-degenerate simplex with faces
  $G_0, G_1, \ldots , G_{n}$ and 
suppose $u$ solves \eqref{E:ef}.

  Then the (semi-classical) Neumann data on each of the the boundary faces satisfies
  \begin{equation}
    \label{E:main-theorem}
  \int_{G_j} | h \p_\omega u |^2 dS_j = \frac{2 \text{Vol}_{n-1}(G_j)}{n\text{Vol}_n(T)}.
  \end{equation}
  Here 
 $h \p_\omega$ is the semi-classical normal derivative on $\p T$,
$dS_j$ is the surface  measure on $G_j$,  $\text{Vol}_n(T)$ is the
  volume of the simplex $T$, and $\text{Vol}_{n-1} (G_j)$ is the $n-1$
  dimensional induced volume of $G_j$.

\end{theorem}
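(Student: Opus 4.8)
The plan is to deduce \eqref{E:main-theorem} from a Rellich--Pohozaev identity, two applications of the divergence theorem, and a short dimension count. Let $\omega$ be the outward unit normal field on $\p T$, write $\omega_j$ for its value on the face $G_j$ (a constant, since $G_j$ is flat), and let $c_j$ be the constant with $G_j\subset\{x:x\cdot\omega_j=c_j\}$; the orientation of $\omega$ does not affect $|h\p_\omega u|^2$, so I fix the outward choice. Abbreviate $f_j:=\int_{G_j}|h\p_\omega u|^2\,dS_j$, the number to be computed. Since $T$ is convex, $u\in H^2(T)$, so $h\p_\omega u$ is a genuine $L^2$ function on $\p T$ and the integrations by parts below are legitimate; moreover, because $u$ vanishes on $\p T$ its tangential derivatives there vanish, so $\nabla u=(\p_\omega u)\,\omega$ on $\p T$, and in particular $|\nabla u|^2=(\p_\omega u)^2$ and $X\cdot\nabla u=(X\cdot\omega)\,\p_\omega u$ there, for any vector field $X$.

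First I would record the two elementary identities $h^2\int_T|\nabla u|^2\,dx=\int_T u^2\,dx=1$, obtained by multiplying \eqref{E:ef} by $u$ and integrating by parts. The core of the argument is then: for any $C^1$ vector field $X$ on $\reals^n$ the quantity $\int_T(X\cdot\nabla u)(h^2\Delta u+u)\,dx$ vanishes since $u$ solves \eqref{E:ef}, and integrating it by parts twice yields an identity relating the bulk terms $h^2\int_T\sum_{i,j}(\p_i X_j)\,\p_i u\,\p_j u\,dx$, $h^2(\div X)\int_T|\nabla u|^2\,dx$, $(\div X)\int_T u^2\,dx$, and the boundary term $\tfrac{h^2}{2}\int_{\p T}(X\cdot\omega)(\p_\omega u)^2\,dS$. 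Specializing to the constant fields $X=e_i$, $i=1,\dots,n$, kills all bulk terms and gives $\sum_{j=0}^n(\omega_j)_i\,f_j=0$, i.e.
\[
\sum_{j=0}^n \omega_j\,f_j=0\quad\text{in }\reals^n.
\]
Specializing to the radial field $X=x$, for which $\div X=n$ and $\sum_{i,j}(\p_i x_j)\,\p_i u\,\p_j u=|\nabla u|^2$, and substituting $h^2\int_T|\nabla u|^2=\int_T u^2=1$ together with $X\cdot\omega=c_j$ on $G_j$, the identity collapses to
\[
\sum_{j=0}^n c_j\,f_j=2.
\]

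To finish I would invoke elementary geometry. The divergence theorem applied to constant vector fields gives the classical closed-polytope relation $\sum_j\Vol_{n-1}(G_j)\,\omega_j=\int_{\p T}\omega\,dS=0$, and applied to $X=x$ it gives $\sum_j c_j\,\Vol_{n-1}(G_j)=\int_{\p T}(x\cdot\omega)\,dS=\int_T\div(x)\,dx=n\,\Vol_n(T)$. As $T$ is a non-degenerate simplex, the normals $\omega_0,\dots,\omega_n$ span $\reals^n$ (for instance, the normals of the $n$ faces through any fixed vertex already form a basis), so the map $(t_0,\dots,t_n)\mapsto\sum_j t_j\omega_j$, $\reals^{n+1}\to\reals^n$, has a one-dimensional kernel; since $(\Vol_{n-1}(G_0),\dots,\Vol_{n-1}(G_n))$ lies in it, the relation $\sum_j\omega_j f_j=0$ forces $f_j=\kappa\,\Vol_{n-1}(G_j)$ for a single scalar $\kappa$. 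Feeding this into $\sum_j c_j f_j=2$ and using $\sum_j c_j\Vol_{n-1}(G_j)=n\,\Vol_n(T)$ yields $\kappa=2/(n\,\Vol_n(T))$, which is \eqref{E:main-theorem}.

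The step I expect to demand the most care is the double integration by parts for a general $X$: correctly isolating the $\nabla X$-contraction and combining the two boundary contributions — one from $\int_T(X\cdot\nabla u)\Delta u$ and one from $\int_T X\cdot\nabla(|\nabla u|^2)$ — via $\nabla u=(\p_\omega u)\,\omega$ on $\p T$. A secondary point, essentially a matter of quoting the appropriate convex-domain regularity, is to guarantee $u\in H^2(T)$ so that the Neumann traces and all boundary integrals make sense; the geometric and linear-algebra ingredients are then routine.
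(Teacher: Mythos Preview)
Your proof is correct. Both you and the paper ultimately use the Rellich identity with the radial field and with constant fields, but the packaging differs. The paper first maps $T$ linearly to the standard simplex $T_0$, applies the commutator $[-h^2\tDelta-1,\sum_j(y_j+m_j)\p_{y_j}]=-2h^2\tDelta$ there, and isolates each face integral by setting $m=0$ and then differentiating in the parameters $m_j$; a second change of variables converts the $T_0$ data back to Neumann integrals on the $G_j$, at the cost of tracking Jacobians and the matrix $BB^T$. You instead work intrinsically on $T$: the choices $X=e_i$ and $X=x$ give the system $\sum_j f_j\,\omega_j=0$ and $\sum_j c_j f_j=2$, and you finish by recognising $(f_j)$ as a scalar multiple of $(\Vol_{n-1}(G_j))$ via the Minkowski relation together with the fact that $t\mapsto\sum_j t_j\omega_j$ has one-dimensional kernel for a non-degenerate simplex. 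Your route is shorter and makes transparent exactly where the simplex hypothesis is used (the normals span $\reals^n$, so the kernel is a line; for a cube it is not); the paper's coordinate approach, on the other hand, is reused verbatim in its inverse-problem section, where the operator with coefficient matrix $BB^T$ on $T_0$ is precisely the object of study, so the extra machinery is not wasted.
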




\begin{remark}

  As in \cite{Chr-tri}, we are calling this ``equidistribution'' of Neumann mass since it
  says that the Neumann data has mass proportional to the
  $(n-1)$-dimensional volume of the face to which it is restricted.

The proportionality constant in \eqref{E:main-theorem} depends in a seemingly
non-obvious way on the dimension $n$.  However, it turns out this is
the right dimensional constant in the case of the Cauchy data for
quantum ergodic eigenfunctions restricted to a hypersurface, and
indeed also for the boundary data quantum ergodic restriction theorems
in the original studies \cite{GeLe-qe,HaZe}.  One of
the original motivations for the present paper was to see if one could
isolate the mass of the Dirichlet vs. Neumann data of quantum ergodic
eigenfunctions restricted to an interior simplex hypersurface in the Cauchy data
restriction theorem in \cite{CTZ-1}.
Unfortunately this does not help, and the present paper and
\cite{Chr-tri} do not preclude the
possibility of quantum ergodic eigenfunctions having $o(1)$ (in $L^2$)
restrictions to the boundary of an interior  simplex.  See below for a
brief history.

 A statement such as Theorem \ref{T:main}
 is false in general for other  polygonal domains.  It is clearly
 false in the case of a square, as discussed in \cite{Chr-tri}, as
 well as for a rectangular parallelepiped in any dimension by looking
 at Fourier series. 

\end{remark}

\subsection{Brief History}

  Previous results on restrictions to hypersurfaces primarily focused on upper bounds.
  Burq-G\'erard-Tzvetkov \cite{BGT-erest} 
  give an 
  upper
  bound of the norm (squared) of the restrictions of eigenfunctions,
  of order $\O(h^{-1/2})$.  
  In
  the author's paper with Hassell-Toth \cite{CHT-ND}, an upper bound of $\O(1)$ was
  proved for (semi-classical) Neumann data restricted to arbitrary
  co-dimension $1$ hypersurfaces in any dimension.
  Both of these estimates are shown to be sharp, so this gives a lower (and
  upper) bound for {\it some} eigenfunctions.

  In the case of quantum ergodic eigenfunctions, a little more is
  known.  G\'erard-Leichtnam \cite{GeLe-qe} and Hassell-Zelditch
  \cite{HaZe} give asymptotic formulae for (a density one subsequence of) the Neumann (respectively
  Dirichlet) boundary data of Dirichlet (respectively Neumann) quantum ergodic eigenfunctions. 
  That means that there is a lower bound, and
  explicit local asymptotic formula in this special case, at least for
  most of the eigenfunctions.  Similar statements were proved for
  interior hypersurfaces in \cite{ToZe-1, ToZe-2, CTZ-1}.  However,
  for an interior hypersurface, it seems an intractible problem to
  separate the behaviour of the Dirichlet or Neumann data, or a sparse
  subsequence must be removed.  
This again gives lower bounds on the norms of the Dirichlet or Neumann data for {\it some} of the eigenfunctions.

\subsection*{Acknowledgements}
The work in this paper is supported in part by NSF grant DMS-1500812.

  \section{The standard simplex in $\reals^3$}
  \label{S:standard}
In this section we prove the theorem for  the standard simplex in
dimension $3$ as it
is simple to see how the proof works in this case.  In Section 
\ref{S:main} we prove the general result.

Let $p_0 = (0,0,0)$, $p_1 = (1,0,0)$, $p_2 = (0,1,0)$, and $p_3 =
(0,0,1)$.  The standard simplex is given by all convex combinations of
these vectors:
\[
T = \left\{ \sum_{j = 0}^3 t_j p_j : \sum_{j = 0}^3 t_j = 1, \text{ and }
t_j \geq 0 \right\}.
\]
That is, $T$ is the four sided solid with the $p_j$ and $0$ at the corners.

We use $(x_1,x_2,x_3)$ as the standard rectangular coordinates in
$\reals^3$.  Let us denote $F_1$ denote the face in the $(x_2, x_3)$
plane (where $x_1 = 0$), $F_2$ the face where $x_2 = 0$, $F_3$ the face where $x_3 = 0$,
and $F_4$ the remaining face.  Then the unit normals are $\nu_j = -e_j$, $j =
1,2,3$ and $\nu_4 = (3)^{-1/2} (1,1,1)$ respectively, where $e_j$ are the standard
basis vectors pointing in the direction of $x_j$ respectively.  Then
the statement of the theorem involves the quantities $|\nu_j \cdot h\p
u|$ restricted to their respective faces.

Let us denote by $X$ the vector field
\[
X = (x_1 + m_1) \p_{x_1} + (x_2 + m_2) \p_{x_2} + (x_3 + m_3)
\p_{x_3},
\]
where the $m_j$s are parameters independent of $x$.  
A simple computation yields that $[-h^2 \Delta -1, X] = - 2 h^2
\Delta$.  Then the eigenfunction equation \eqref{E:ef} and an application of Green's formula gives
\begin{align*}
  \int_T & ([-h^2 \Delta -1, X ] u) \bu dV \\
  & = \int_T ((-h^2 \Delta -1)X u ) \bu dV \\
  & = \int_{\p T } (-h \p_\nu hXu) \bu dS + \int_{\p T} (hXu)( h \p_\nu
  \bu ) dS,
\end{align*}
or
\begin{align}
2 & = 2 \int_T | u |^2 dV \notag \\
& =   -2 \int_T (h^2 \Delta u) \bu dV \notag \\
&  =  \int_{\p T } (-h \p_\nu hXu) \bu dS + \int_{\p T} (hXu)( h \p_\nu
\bu ) dS \\
& =\int_{\p T} (hXu)( h \p_\nu
\bu ) dS
,    \label{E:master-0}
\end{align}
since we have assumed Dirichlet boundary conditions.

Let us break the analysis into the four different faces.
On $F_1$, we have
\begin{align*}
\int_{F_1} & 
(h X u ) ( h \p_\nu \bu ) dS \\
& = \int_{F_1} (((x_1 + m_1 ) h \p_{x_1} + (x_2 + m_2 ) h \p_{x_2} +
(x_3 + m_3 ) h \p_{x_3} ) u ) \bu dS_1 \\
& = -m_1 \int_{F_1} | h \p_{\nu_1} u |^2 dS_1,
\end{align*}
since $h \p_{x_1} = - h \p_{\nu_1}$ and $h \p_{x_j}$ is tangential
  when $j = 2,3$.
  Similarly, for $j = 2,3$ we have
  \[
  \int_{F_j} (hX u ) (h \p_{\nu_j} \bu ) dS_j = -m_j \int_{F_j} | h
  \p_{\nu_j} |^2 d S_j.
  \]

  On $F_4$ we need to be a little bit more careful.  The points on
  $F_4$ all satisfy $x_1 + x_2 + x_3 = 1$ since the normal is parallel
  to $(1,1,1)$.  
The normal derivative is $h\p_{\nu_4} = 3^{-1/2} ( h \p_{x_1} + h
\p_{x_2} + h \p_{x_3} )$, and 
 the tangent vectors are all linear combinations of
 $e_3 - e_1 = (-1,0,0)$ and $e_2 - e_1 = (-1,1,0)$, so that
 \[
 \p_{x_j} = 3^{-1/2} \p_{\nu_4}
 \]
 for $j = 1,2,3$.  Hence
 \begin{align*}
   \int_{F_4} & (h X u ) h \p_{\nu_4} \bu dS_4 \\
 &  = \int_{F_4} ( ( (x_1 + m_1 ) h \p_{x_1} + (x_2 + m_2 ) h
   \p_{x_2}+(x_3 + m_3 ) h \p_{x_3} ) u ) h \p_{\nu_4} \bu d S_4 \\
   & =  (3)^{-1/2} \int_{F_4} ( ( (x_1 + m_1 )  + (x_2 + m_2 ) +(x_3 +
   m_3 )  ) h \p_{\nu_4} u ) h \p_{\nu_4} \bu d S_4
   \\
   & =  3^{-1/2} (1 +  m_1 + m_2 + m_3 )  \int_{F_4} | h \p_{\nu_4} u |^2
   dS_4.
 \end{align*}
 Summing up, we have
 \begin{align}
   2 & = -m_1 \int_{F_1} | h \p_{\nu_1} u |^2 dS_1  - m_2 \int_{F_2} |
   h \p_{\nu_2} u |^2 dS_2 -m_3 \int_{F_3} | h \p_{\nu_3} u |^2 dS_3
   \notag 
   \\
   & + 3^{-1/2} (1 + m_1 + m_2 + m_3 ) \int_{F_4} | h \p_{\nu_4} u |^2
   dS_4. \label{E:master-1}
   \end{align}

 Now if $m_j = 0$ for $j = 1,2,3$, using \eqref{E:master-1} we have
 \[
 2 = (3)^{-1/2} \int_{F_4} | h \p_{\nu_4} u |^2
 dS_4,
 \]
 so that
 \[
 \int_{F_4} | h \p_{\nu_4} u |^2 dS_4 = 3^{1/2} \cdot  2   .
 \]
 We know that $\text{Vol}_3 (T) = 1/3! = 1/6$.  The cross product
 computes the area of the parallelogram, which is twice the area of
 the triangle, so that 
 tells us that
 \begin{align*}
   \text{Vol}_2 (F_4) & = |(-1,1,0) \times (-1,0,1) |/2 \\
   & = \sqrt{3} /2.
 \end{align*}
 Hence
 \begin{align*}
   \int_{F_4} | h \p_{\nu_4} u |^2 dS_4 & = {2}\cdot {  3^{1/2}} \\
   & =  4 ({\sqrt{3} / 2 }) \\
   & = (2/3) \left( \frac{2 \cdot 3^{1/2}}{1/6} \right) \\
   & = \frac{2 \text{Vol}_{2}(F_4)}{n\text{Vol}_3(T)}.
 \end{align*}

 For $j = 1,2,3$ we have
 \[
 \text{Vol}_{2} (F_j ) = 1/2.
 \]
 Differentiating \eqref{E:master-1} with respect to $m_j$, we have
 \[
 0 = - \int_{F_j} | h \p_{\nu_j} u |^2 d S_j + (3)^{-1/2} \int_{F_4} |
 h \p_{\nu_4} u |^2 d S_4 ,
 \]
 or
 \begin{align*}
   2 & = \int_{F_j }  | h \p_{\nu_j} u |^2 dS_j \\
   & = \left(\frac{2}{3} \right) \left( \frac{1/2}{1/6} \right) \\
   & = \left( \frac{2}{3} \right) \frac{ \text{Vol}_2 (F_j)
   }{\text{Vol}_3 (T)}.
   \end{align*}
 This proves the theorem for the standard simplex in dimension $3$.

\section{Proof of Theorem \ref{T:main}}
\label{S:main}

Let $p_1, \ldots, p_n$ be independent vectors in $\reals^n$, and let
$p_0 = (0, \ldots , 0 )$ denote the origin.  Then
\[
T = \left\{ \sum_0^n t_j p_j : \sum t_j = 1 \text{ and } t_j \geq 0 \right\}
\]
is a simplex.  If $p_j = e_j$ (standard rectangular basis vectors) for
each $j$, then we say $T$ is the standard simplex and denote it by
$T_0$.  

Since the $p_j$s are independent, the matrix
\[
A = \left[ \begin{array}{cccc} \vline & \vline & \cdots & \vline \\
    p_1 & p_2 & \cdots & p_n \\
    \vline & \vline & \cdots & \vline
  \end{array}
  \right]
\]
is invertible.  Let $B = A^{-1}$, and for $x \in \reals^n$ set
\[
y = B x.
\]
This transformation simply takes the simplex $T$ to the standard
simplex $T_0$.  Indeed, if $x = p_j$, then $Bx = e_j$.  Hence
\[
T_0 = \left\{ \sum t_j B p_j , \, \sum t_j = 1, \, t_j \geq 0
\forall j \right\}.
\]
We
pause briefly to point out that this change of variables induces a
volume element, so that
\[
\det(A) = n! \Vol (T).
\]
This is easily seen using the volume of the standard simplex is $1/n!$
and the Jacobian for a change of volume integral is $\det (A)$.

We lift the transformation to $T^*\reals^n$: for $\xi \in \reals^n$,
let $\eta = (B^{-1})^T \xi$.  Then since the symbol of the Laplacian
in $\reals^n$ is $\xi_1^2 + \ldots + \xi_n^2$, the symbol for the
Laplacian in our new coordinates is
\[
\xi^T \xi = \eta^T B B^T \eta.
\]
Set $\Gamma = B B^T$ and
\[
-h^2 \tDelta = - \sum \Gamma_{ij} \p_{y_i} \p_{y_j},
\]
the Laplacian in the $y$ coordinates on the standard simplex $T_0$.

For the eigenfunctions $u$ on $T$, let $v(y) = u (Ay)$ be the
eigenfunctions in the $y$ coordinates.  Since $-h^2 \tDelta$ is constant coefficient, the same commutator
argument can be used here.  Indeed, let
\[
Y = \sum (y_j + m_j) \p_{y_j},
\]
and a simple calculation gives
\[
  [-h^2 \tDelta -1 , Y ] = -2 h^2 \tDelta.
  \]
  Following the recipe in Section \ref{S:standard}, we have using $-h^2 \tDelta
  v = v$ and Green's formula
  \begin{align}
    2 \int_{T_0} |v|^2 dy  & = -2 \int_{T_0} (h^2 \tDelta v) \bv dy
    \notag \\
    & = 
        \int_{T_0}  ([-h^2 \tDelta -1 , Y ] v )\bv dy \notag \\
        & = \int_{ T_0} ((-h^2 \tDelta -1) Y v ) \bv dy \notag \\
& = \int_{T_0} ( (-(h\p)^T  B B^T h \p - 1)Y v) \bv dy \notag \\
& = \int_{T_0} (B B^T h \p  Yv ) \cdot ( h \p \bv ) dy  - \int_{T_0}
(Y v) \bv dy
\notag \\
& \quad  + \int_{\p T_0 } ( -  \nu^T  B B^T h \p (h Y v) ) \bv dy  
\notag \\
& = \int_{T_0} (B B^T h \p  Yv ) \cdot ( h \p \bv ) dy   - \int_{T_0}
(Y v) \bv dy
\notag 
\end{align}
since we have assumed Dirichlet boundary condtions.  Here $\nu$
denotes the unit outward normal and $dS$ denotes the induced surface measure.  Continuing,
\begin{align}
2 \int_{T_0} |v|^2 dy  & =\int_{T_0} (B B^T h \p  Yv ) \cdot ( h \p \bv ) dy   - \int_{T_0}
(Y v) \bv dy
\notag \\
& = \int_{T_0} (Yv) (- h \p^T B B^T h \p \bv ) dy   - \int_{T_0}
(Y v) \bv dy
\notag
\\
& \quad         +  \int_{\p T_0} (hY v ) (\nu^T B B^T h \p \bv) dS \\
& =  \int_{\p T_0} (hY v ) (\nu^T B B^T h \p \bv) dS
\label{E:master-2}
  \end{align}
since $\tDelta \bv = \bv$.  


We have changed variables to be on $T_0$ in order to make sure the
normal vectors are easy to compute.  
For $T_0$, let $F_j$ be the side where
$y_j = 0$, $1 \leq j \leq n$, and $F_0$ the remaining face.  Then for $1 \leq j \leq n$,
we have the outgoing normal vectors to  $F_j$ $\nu_j = - e_j$, where
the $e_j$ are the standard basis vectors.
For $F_0$, we have transformed to $T_0$ so that
\[
\nu_0  =  n^{-1/2} (1, \ldots, 1).
\]
Then the unit normal derivatives are
\[
h \p_{\nu_j} = - h \p_{y_j}
\]
for $1 \leq j \leq n$ and
\[
h \p_{\nu_0} = n^{-1/2} ( h \p_{x_1} + \ldots + h \p_{x_n} ).
\]
We are assuming  Dirichlet boundary conditions, so all of the
tangential derivatives of $v$ vanish.  That is, for $1 \leq j \leq n$,
\[
h  \p_\ell v = 0,
\]
except for $\ell =  j$.  We also have using symmetry that on $F_0$,
\[
h \p_{\nu_0} v = n^{1/2} h \p_{y_j} v
\]
for every $1 \leq j \leq n$.  We recall again that $y_j = 0$ on $F_j$
for $1 \leq j \leq n$ and on $F_0$ we have $y_1 + y_2 + \ldots + y_n =
1$.

Plugging these observations in to \eqref{E:master-2}, we have
\begin{align}
  2 \int_{T_0} |v|^2 dy &  =  \int_{\p T_0} (hY v ) (\nu^T B B^T h \p
  \bv) dS \notag \\
  & = \sum_{j = 1}^n \int_{F_j} \left( \left(\sum_\ell (m_\ell + y_\ell) h
  \p_{y_\ell} \right) v \right) ( \nu_j^T B B^T h \p
  \bv ) dS_j \notag \\
  & \quad + \int_{F_0} \left(\left(\sum_\ell (m_\ell + y_\ell) h
  \p_{y_\ell} \right) v\right) ( {\nu_0}^T B B^T h \p \bv ) dS_0 \notag \\
& = \sum_{j = 1}^n \int_{F_j} ( m_j h \p_{y_j} v) ( h \p_{\nu_j}
  \bv ) dS_j \notag \\
  & \quad + \int_{F_0} \left(\sum_{1}^n (n^{-1/2} (y_j + m_j ))h
  \p_{\nu_0} v \right) (\nu_0^T B B^T h \p \bv ) dS_0 \notag \\
  & = \sum_{j = 1}^n \int_{F_j} (- m_j h \p_{\nu_j} v) ( \nu_j^T B B^T
  h \p
  \bv ) dS_j \notag \\
  & \quad + \int_{F_0} n^{-1/2} (1 + m_1 + \ldots + m_n)(( h
  \p_{\nu_0} ) v) (\nu_0^T B B^T h \p \bv ) dS_0 \notag \\
  & = \sum_{j = 1}^n (-m_j) I_j +
 n^{-1/2} (1 + m_1 + \ldots + m_n) I_0, \label{E:master-3}
\end{align}
where for each $0 \leq j \leq n$
\[
I_j = \int_{F_j} ( h \p_{\nu_j} v) (\nu_j^T B B^T h \p \bv )  dS_j.
\]

Let us now compute the $I_j$s.  Using equation \eqref{E:master-3},
setting $m_j = 0$ for all $ 1\leq j \leq n$, we have
\[
I_0 = 2 n^{1/2}  \int_{T_0} |v|^2 dy.
\]
Differentiating equation \eqref{E:master-3} with respect to $m_j$
yields for $1 \leq j \leq n$
\[
I_j = n^{-1/2} I_0 = 2 \int_{T_0} |v|^2 dy.
\]

Now we must compute the $I_j$ in terms of the corresponding integrals
on the original simplex $T$.  We first observe that, since for $1 \leq
j \leq n$ we have $F_j \subset \{ y_j = 0 \}$, changing variables on
one of the boundary integrals induces the area of the
$(n-1)$-dimensional parallelepiped spanned by $p_1, p_2, \ldots,
p_{j-1}, p_{j+1} , \ldots , p_n$.  Denote this parallelepiped
$\Gamma_j$, and observe that 
\[
\Vol_{n-1} \Gamma_j = (n-1)! \Vol_{n-1} (G_j),
\]
where $G_j$ is the $(n-1)$-dimensional simplex spanned by $p_1, p_2, \ldots,
p_{j-1}, p_{j+1} , \ldots , p_n$.

For $F_0$, our area element is $n^{1/2} dy$, so changing variables in
the integral over $F_0$ induces the area of the parallelepiped spanned
by $p_1, p_2 - p_1, p_3 - p_1, \ldots , p_n - p_1$ divided by
$n^{1/2}$.  Denote this parallelepiped by $\Gamma_0$, and again we
have
\[
\Vol_{n-1} (\Gamma_0) = (n-1)! \Vol_{n-1} (G_0).
\]

We now need to compute the integrand inside of each $I_j$ in terms of
the corresponding normal derivatives on $G_j$ of $u$.

We first observe that on $F_j$, for $1 \leq j \leq n$, $h \p_{y_\ell} \bv = 0$ for
$\ell \neq j$, so that the semiclassical gradient can be written 
\[
h \p_y v|_{F_j} = e_jh \p_{y_j} v|_{F_j} = \nu_j h \p_{\nu_j} v |_{F_j}.
\]
Similarly, for $j = 0$, we have on $F_0$
\begin{align*}
h \p v & = 
\left[ \begin{array}{c}
    h \p_{y_1} \\
    \vdots \\
    h \p_{y_n}
  \end{array}
  \right] v \\& = 
n^{-1/2} \left[ \begin{array}{c}
    1 \\
    \vdots \\
    1
  \end{array}
  \right] h \p_{\nu_0} v     \\
& = \nu_0 h \p_{\nu_0} v 
.
\end{align*}

Now for each $j$, let $\omega_j$ be the unit outward normal on $G_j$.  
We know for each $j$ on the face $G_j$
\begin{align*}
h\p_{\omega_j}u|_{G_j} & = \omega_j^T h \p_x u|_{G_j} \\
& = \omega_j^T  B^T h \p_y v|_{F_j}\\
& = (B \omega_j)^T h \p_y v |_{F_j}\\
& = (B \omega_j)^T \nu_j h \p_{\nu_j} v|_{F_j}\\
& = (\omega_j^T B^T \nu_j) h \p_{\nu_j} v|_{F_j} ,
\end{align*}
so that
\[
h \p_{\nu_j} v|_{F_j} = (\omega_j^T B^T \nu_j)^{-1} h \p_{\omega_j} u |_{G_j}
\]
written in the $y$ and $x$ coordinates respectively.

On the other hand, we have $h \p_x = B^T h \p_y$, so that
\[
\nu_j^T B h \p_xu = \nu_j^T B B^T h \p_y v.
\]
The left hand side is zero except for the projection on to the $\omega_j$,
so that on each $G_j$ we have 
\begin{align}
\nu_j^T B h \p_x u & = (\nu_j^T B \omega_j) \omega_j^T h \p_x u \notag
\\
& = (\omega_j^T B^T \nu_j) h \p_{\omega_j} u. \label{E:Neumann-trans}
\end{align}
Hence
\[
\nu_j^T B B^T h \p_y v|_{F_j} = (\omega_j^T B^T \nu_j) h \p_{\omega_j}
u|_{G_j}.
\]

Plugging these observations in to the formulae for the $I_j$, we get
for $1 \leq j \leq n$ 
\begin{align*}
I_j & = \int_{F_j} ( h \p_{\nu_j} v) (\nu_j^T B B^T h \p \bv )  dS_j
\\
& = \frac{1}{(n-1)! \Vol_{n-1} (G_j)}\int_{G_j} \left((\omega_j^T B^T
\nu_j)^{-1} h \p_{\omega_j} u |_{G_j}\right)  \left((\omega_j^T B^T \nu_j) h \p_{\omega_j}
\bu|_{G_j} \right) d \tS_j \\
& = \frac{1}{(n-1)! \Vol_{n-1} (G_j)}\int_{G_j} | h \p_{\omega_j } u|^2 d \tS_j ,
\end{align*}
where $d \tS_j$ is the induced surface measure on $G_j$.

On the other hand, for $I_0$, we have
\begin{align*}
I_0 & =  \int_{F_0} ( h \p_{\nu_0} v) (\nu_0^T B B^T h \p \bv )  dS_0 \\
& = \frac{n^{1/2}}{(n-1)! \Vol_{n-1} (G_0)} \int_{G_0} | \p_{\omega_0}
u|^2 d \tS_0,
\end{align*}
where $d \tS_0$ is the induced surface measure on $G_0$.

We recall that
\[
\int_{T_0} | v |^2 d y = \frac{1}{n! \Vol_n ( T ) },
\]
so that rearranging we have for each $1 \leq j \leq n$
\[
I_j = \frac{2}{n! \Vol_n ( T ) },
\]
and
\[
I_0 = n^{1/2}\frac{2}{n! \Vol_n ( T ) }.
\]
Rearranging, we have for $0 \leq j \leq n$
\begin{align*}
  \int_{G_j} & | h \p_{\omega_j} u |^2 d \tS_j \\
  & = \frac{2 (n-1)! \Vol_{n-1}(G_j)}{n! \Vol_n (T) } \\
  & = \frac{2  \Vol_{n-1}(G_j)}{n \Vol_n (T) },
\end{align*}
which completes the proof of Theorem \ref{T:main}.

\section{A simple inverse problem}
\label{S:inverse}
The proof of Theorem \ref{T:main} suggests a further question: 
If $u$ solves a constant coefficient eigenfunction equation, does the
Neumann data determine the coefficients?  In fact, in this paper, we
only have information about the norms of the Neumann data, so we
cannot fully answer this question using only this very elementary information.  In fact, in the general case, the
answer is that the norms of the Neumann data do not determine the
coefficients (see Subsection \ref{SS:3d-ex} below).  However, in dimension 2 the norms {\it do} determine
the coefficients.  We will return to this question after a few easier results.

This question is, of course intimately related to posing the standard
Laplacian eigenfunction problem on a different simplex.  Let us pose
it as such in dimension 2.  Let $T \subset \reals^2$ be a triangle
with sides $a,b,c$, with the convention that the length of the sides
are $a,b,c$ respectively.  
Suppose $u$ solves
\begin{equation}
 \label{E:efn-10}
\begin{cases}
  (-h^2 \Delta -1 ) u = 0 \text{ on } T, \\
  u |_{\p T} = 0, \\
  \| u \|_{L^2 (T)}.
\end{cases}
\end{equation}

We have the following Theorem.
\begin{theorem}
  \label{T:tri-inv}
  Suppose $u$ solves \eqref{E:efn-10}, and suppose $N_{a} = \int_a | h
  \p_\nu u |^2 d S$ and similarly for $N_b$ and $N_c$.  Then the three
  quantities $N_a, N_b , N_c$ uniquely determine the triangle $T$ (up
  to reflection).
\end{theorem}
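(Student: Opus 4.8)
The strategy is to apply Theorem~\ref{T:main} in dimension $n=2$ to extract the three side lengths from the Neumann masses, and then invoke the classical SSS (side-side-side) rigidity: a triangle is determined up to congruence (and reflection) by its three side lengths. Concretely, by Theorem~\ref{T:main} with $n=2$, if $G_j$ is a side of length $\ell_j$ then
\begin{equation*}
N_{\ell_j} = \int_{G_j} | h \p_\nu u |^2 dS = \frac{2 \,\Vol_1(G_j)}{2\,\Vol_2(T)} = \frac{\ell_j}{\text{Area}(T)}.
\end{equation*}
So the three numbers $N_a, N_b, N_c$ are the three side lengths $a,b,c$ each divided by the common factor $\text{Area}(T)$.

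The key step is to recover $\text{Area}(T)$ from the triple $(N_a, N_b, N_c)$, since then $a = N_a \cdot \text{Area}(T)$, etc., and SSS finishes the argument. For this I would use Heron's formula: writing $s = \tfrac12(a+b+c)$,
\begin{equation*}
\text{Area}(T)^2 = s(s-a)(s-b)(s-c) = \frac{1}{16}\big(2a^2b^2 + 2b^2c^2 + 2c^2a^2 - a^4 - b^4 - c^4\big).
\end{equation*}
Substituting $a = N_a A$, $b = N_b A$, $c = N_c A$ with $A = \text{Area}(T)$, the right-hand side is $A^4$ times the same symmetric polynomial evaluated at $(N_a, N_b, N_c)$; call that polynomial value $H(N_a,N_b,N_c)$, which is a positive real number computable from the data alone. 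Then $A^2 = A^4 H$, so $A^2 = 1/H$, i.e. $A = H^{-1/2}$ is determined by the data. (Here $H > 0$ because $(N_a, N_b, N_c)$ are proportional to genuine triangle side lengths, so they satisfy the strict triangle inequalities; equivalently $H(N_a,N_b,N_c) = N_aN_bN_c \cdot (\text{product of Neumann-mass ``semiperimeter'' factors})/4 \cdot (\ldots)$ — the point is only that we may take the positive square root.) With $A$ in hand, the side lengths $a = N_a A$, $b = N_b A$, $c = N_c A$ are determined, and a triangle with prescribed side lengths is unique up to isometry, hence up to reflection. This proves the theorem.

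The only mild subtlety — the part I would write most carefully — is the nondegeneracy/positivity bookkeeping: one must note that the $N_j$ are strictly positive and satisfy the strict triangle inequality so that $H(N_a,N_b,N_c) \ne 0$ and the reconstruction $A = H^{-1/2}$ makes sense; this is automatic because they are a positive multiple of the side lengths of the given non-degenerate triangle $T$. There is no hard analysis here at all — the analytic content is entirely contained in Theorem~\ref{T:main}, and the remainder is the elementary fact that Heron's formula, viewed as an identity homogeneous of degree $4$ in $(a,b,c)$, pins down the scale factor once one knows the side lengths up to scale together with the area. A brief remark could point out that the same scheme recovers $T$ from $(N_a,N_b,N_c)$ explicitly, not merely abstractly.
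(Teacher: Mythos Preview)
Your argument is correct. Both you and the paper start from Theorem~\ref{T:main} to obtain $N_a = a/\text{Area}(T)$ (and similarly for $b,c$), so the data determine the side lengths up to a common scale. Where you diverge is in pinning down that scale: you invoke Heron's formula, exploit its degree-4 homogeneity to solve $A^2 = A^4 H(N_a,N_b,N_c)$, and thereby \emph{reconstruct} $A$ and the sides explicitly. The paper instead runs a uniqueness argument: if a second triangle $T_1$ has the same Neumann masses, the relations force $a/a_1 = b/b_1 = c/c_1 = \text{Area}(T)/\text{Area}(T_1) =: \lambda$, so $T$ and $T_1$ are similar; but similarity with ratio $\lambda$ forces $\text{Area}(T) = \lambda^2\,\text{Area}(T_1)$, whence $\lambda^2 = \lambda$ and $\lambda = 1$. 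The paper's version is a touch slicker (no Heron's formula, just the observation that area scales quadratically), while yours buys an explicit reconstruction formula for $T$ from $(N_a,N_b,N_c)$, which is a nice bonus and in the spirit of the explicit formulas the paper gives later for $\Gamma$.
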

This theorem seems obvious, but in the formulae for the $N_a$, $N_b$,
and $N_c$, there is both the length of the side {\it and} the area of
the triangle.  The proof is by scaling.

\begin{proof}

  Suppose we have another triangle $T_1$ with the same Neumann data
  norms.  Let $a_1, b_1, c_1$ denote the three sides of $T_1$, again
  with the convention that $a_1, b_1, c_1$ denote also the length of
  the sides.  We know that the Neumann data relates the lengths of the
  sides to the area of the triangle.  We have
  \[
  N_a = \frac{a}{\text{Area}(T)},
  \]
  and similarly for $b,c$.  On the other hand, we also have
  \[
  N_a = \frac{ a_1}{\text{Area}(T_1)},
  \]
  and similarly for  $b_1, c_1$.  Equating these quantities, we have
  \[
  \frac{a}{a_1} = \frac{\text{Area}(T)}{\text{Area}(T_1)},
  \]
  and similarly
  \[
  \frac{b}{b_1} = \frac{c}{c_1} =
  \frac{\text{Area}(T)}{\text{Area}(T_1)}.
  \]
  This means that the side lengths of $T_1$ are all scalar multiples
  of the corresponding sides on $T$ with the same scalar.  Hence $T_1$
  is similar to $T$.  Let
  \[
  \lambda = \frac{\text{Area}(T)}{\text{Area}(T_1)}.
  \]
  On the one hand, this implies that
  \begin{equation}
    \label{E:T-T1}
    {\text{Area}(T)} = \lambda {\text{Area}(T_1)}.
    \end{equation}
    On the other hand, we have
    \begin{equation}
      \label{E:a-a1}
    a = \lambda a_1
    \end{equation}
    and similarly
    \begin{equation}
      \label{E:b-b1}
    b = \lambda b_1, \,\,\, c = \lambda c_1.
    \end{equation}
    As the lengths scale linearly, the area scales quadratically.
    That is, \eqref{E:a-a1} and \eqref{E:b-b1} imply that
    \[
    \text{Area}(T) = \lambda^2 \text{Area}(T_1).
    \]
    Hence combining with \eqref{E:T-T1}, we have $\lambda^2 = \lambda$,
    so that $\lambda = 1$.  This means precisely that $T = T_1$ (up to reflection).

\end{proof}

We now consider the question of determining the coefficients of a
constant coefficient elliptic operator on the standard 2-simplex.  Let
$B$ be a non-degenerate $2 \times 2$ matrix, and let $\Gamma = B
B^T$.  Consider $P = - \Gamma_{ij} h \p_{x_i} h \p_{x_j}$ be the
associated positive definite elliptic operator.  Our next result is
that the semi-classical Neumann data  uniquely determines the operator $P$.
Interestingly, this does not determine the matrix $B$ (see Remark \ref{R:non-unique}).

\begin{theorem}
  \label{T:Gamma-I}
  Let $B$ be a non-degenerate $2 \times 2$ matrix and $\Gamma = B
  B^T$.  Let $P = - \Gamma_{ij} h \p_{x_i} h \p_{x_j}$.  Let $T_0$ be
  the standard triangle in $\reals^2$ generated by the vectors $(1,0)$
  and $(0,1)$.  Suppose $u$ solves the eigenfunction problem
  \[
  \begin{cases}
    P u = u \text{ in } T_0,
    \\
    u |_{\p T_0} = 0, \\
    \| u \|_{L^2( T_0)} = 1.
  \end{cases}
  \]
  Let $F_1$ and $F_2$ denote the sides of length $1$ and $F_0$ the
  hypotenuse of length $\sqrt{2}$.  Then the norms
  \[
  \| h \p_{\nu } u
  \|_{L^2(F_1)}^2 ,\, \| h \p_{\nu } u
  \|_{L^2(F_2)}^2, \, \text{and }\| h \p_{\nu } u
  \|_{L^2(F_0)}^2  
  \]
  uniquely determine $\Gamma$.
 
\end{theorem}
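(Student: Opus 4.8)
The plan is to run the same commutator/Green's formula machinery as in Section~\ref{S:main}, but now with the operator $P = -\Gamma_{ij} h\p_{x_i} h\p_{x_j}$ directly on the \emph{standard} triangle $T_0$, without changing variables. So I would first fix the vector field $Y = \sum_j (y_j + m_j)\p_{y_j}$ and record that, since $P$ has constant coefficients, $[P - 1, Y] = -2P$, so that exactly as in \eqref{E:master-2} the identity
\[
2\int_{T_0} |u|^2\, dy = \int_{\p T_0} (hYu)(\nu^T \Gamma\, h\p \bu)\, dS
\]
holds, with $\|u\|_{L^2(T_0)} = 1$ on the left. On $F_1$ and $F_2$ the outward normals are $-e_1$ and $-e_2$, and Dirichlet conditions kill all tangential derivatives, so as before the $F_j$ contribution is $-m_j I_j$ with $I_j = \int_{F_j} (h\p_{\nu_j} u)(\nu_j^T \Gamma\, h\p\bu)\, dS_j$; on $F_0$ one has $\nu_0 = 2^{-1/2}(1,1)$, $h\p u|_{F_0} = \nu_0\, h\p_{\nu_0} u$, and the contribution is $2^{-1/2}(1 + m_1 + m_2) I_0$. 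Differentiating in $m_1, m_2$ and setting $m = 0$ as in Section~\ref{S:main} gives $I_0 = 2\sqrt{2}$ and $I_1 = I_2 = 2$.

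Next I would unwind what $I_j$ says about the Neumann \emph{norms}. Writing $\Gamma = (\Gamma_{k\ell})$, on $F_1$ we have $h\p u = e_1\, h\p_{\nu_1} u$ (up to sign), so $\nu_1^T \Gamma\, h\p\bu = \Gamma_{11}\, h\p_{\nu_1}\bu$ on $F_1$, whence
\[
2 = I_1 = \Gamma_{11} \int_{F_1} |h\p_{\nu_1} u|^2\, dS_1,
\qquad\text{i.e.}\qquad
\|h\p_\nu u\|_{L^2(F_1)}^2 = \frac{2}{\Gamma_{11}},
\]
and symmetrically $\|h\p_\nu u\|_{L^2(F_2)}^2 = 2/\Gamma_{22}$. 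On $F_0$, with $\nu_0 = 2^{-1/2}(1,1)$, one gets $\nu_0^T\Gamma\nu_0 = \tfrac12(\Gamma_{11} + 2\Gamma_{12} + \Gamma_{22})$ and $h\p u|_{F_0} = \nu_0\, h\p_{\nu_0} u$, so $\nu_0^T\Gamma\, h\p\bu = (\nu_0^T\Gamma\nu_0)\, h\p_{\nu_0}\bu$ and therefore
\[
2\sqrt{2} = I_0 = \frac{\Gamma_{11} + 2\Gamma_{12} + \Gamma_{22}}{2}\int_{F_0} |h\p_{\nu_0} u|^2\, dS_0,
\qquad\text{i.e.}\qquad
\|h\p_\nu u\|_{L^2(F_0)}^2 = \frac{4\sqrt{2}}{\Gamma_{11} + 2\Gamma_{12} + \Gamma_{22}}.
\]

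Finally I would read off the inverse statement. From the first two norms one recovers $\Gamma_{11}$ and $\Gamma_{22}$ as the reciprocals (times $2$) of $\|h\p_\nu u\|_{L^2(F_1)}^2$ and $\|h\p_\nu u\|_{L^2(F_2)}^2$; then the third norm pins down $\Gamma_{11} + 2\Gamma_{12} + \Gamma_{22}$, hence $\Gamma_{12}$, since $\Gamma$ is symmetric. Thus all three entries of $\Gamma$ are determined, and the map from the triple of Neumann norms to $\Gamma$ is the explicit (rational) inverse of the three displayed formulas, which is clearly injective on the cone of positive-definite $\Gamma$. The only genuine point to check carefully — the potential obstacle — is that the boundary integrands really collapse to a scalar multiple of $|h\p_{\nu_j} u|^2$: this rests on the fact that under Dirichlet conditions the full semiclassical gradient on $F_j$ is normal, $h\p u|_{F_j} = \nu_j\, h\p_{\nu_j} u$, so that $\nu_j^T\Gamma\, h\p\bu|_{F_j} = (\nu_j^T\Gamma\nu_j)\, h\p_{\nu_j}\bu|_{F_j}$; one should also verify the integrals $\int_{F_j}|h\p_{\nu_j}u|^2\,dS_j$ are nonzero (a Dirichlet eigenfunction on a domain with corners cannot have vanishing Neumann data on a whole face, by unique continuation), so that the division is legitimate.
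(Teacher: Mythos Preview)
Your argument is correct and arrives at exactly the same explicit formulas as the paper, but by a more direct route. The paper's proof first diagonalizes $\Gamma$ by writing $\Gamma = GG^T$ with $G = L\,\diag(\lambda_1,\lambda_2)$ for an orthogonal $L$, changes variables $x\mapsto G^{-1}x$ to land on a new triangle $T$ on which $P$ becomes the ordinary Laplacian, invokes Theorem~\ref{T:main} on $T$, and then pulls the result back via Lemma~\ref{L:other-N} together with the identity $|G^T\nu_j|^2 = \nu_j^T\Gamma\nu_j$. You instead keep $P$ on $T_0$, rerun the commutator/Green identity of Section~\ref{S:main} directly for $P$ (this is precisely the computation leading to \eqref{E:master-3}, read with the identity map on the geometry side but with $\Gamma$ in the operator), and then use the content of Lemma~\ref{L:other-N} in situ, namely $\nu_j^T\Gamma\,h\p\bu|_{F_j} = (\nu_j^T\Gamma\nu_j)\,h\p_{\nu_j}\bu|_{F_j}$, to read off $\Gamma_{11}$, $\Gamma_{22}$, and $\Gamma_{11}+2\Gamma_{12}+\Gamma_{22}$ immediately. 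Your path is shorter and self-contained, avoiding the eigendecomposition and the appeal to Theorem~\ref{T:main}; the paper's path has the virtue of making explicit the equivalence between the inverse problem for $P$ on $T_0$ and the forward equidistribution statement on a general triangle. Two minor remarks: the commutator sign is $[P-1,Y] = 2P$ rather than $-2P$, though this is immaterial since $Pu = u$; and the nonvanishing of the $J_j$ that you flag at the end is automatic from the formulas themselves, since $\Gamma$ positive definite forces $\nu_j^T\Gamma\nu_j > 0$ and hence $J_j = I_j/(\nu_j^T\Gamma\nu_j) > 0$, so no unique continuation is needed.
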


\begin{remark}
  We pause to remark that in the statement of the theorem is buried a
  rather astounding fact: the norms of the (semi-classical) Neumann data of {\it any
    single} eigenfunction determine $\Gamma$.  Of course this requires
  some knowledge also about the spectrum.  In other words, if one
  eigenvalue and corresponding eigenfunction's Neumann mass is known,
  then $\Gamma$ is  uniquely
  determined.

\end{remark}

\begin{remark}
It is also very interesting that the proof in fact computes the
entries of $\Gamma$ explicitly in terms of the Neumann data norms.  
Indeed, if we label
 \[
  J_1 = \| h \p_{\nu } u
  \|_{L^2(F_1)}^2 ,\, J_2 = \| h \p_{\nu } u
  \|_{L^2(F_2)}^2, \]
  and
  \[ J_0 = \| h \p_{\nu } u
  \|_{L^2(F_0)}^2 , 
  \]
  and we write $\Gamma = (\Gamma)_{jk}$, we have
  \[
  \Gamma_{11} = \frac{2}{J_1},
  \]
  \[
  \Gamma_{22} = \frac{2}{J_2},
  \]
  and
  \[
  \Gamma_{12} = \Gamma_{21} = \frac{2 \sqrt{2}}{J_0} - \frac{1}{J_1} -
  \frac{1}{J_2}.
  \]

  In particular, if $J_1 = J_2 = 2$ and $J_0 = 2 \sqrt{2}$, we have
  $\Gamma = I$ as expected (since each $J_j$ is twice the length of
  the sides, which is the length of the side divided by the area of
  the triangle).

  \end{remark}

First we write a Lemma giving yet another way of computing the Neumann
data mass.  We state this Lemma in any dimension.

\begin{lemma}
  \label{L:other-N}
Let $B$ be a non-degenerate $n \times n$ matrix, $\Gamma = B B^T$, and
\[
P = - \Gamma_{ij} h \p_{x_i} h \p_{x_j}.
\]
Let $T_0$ be the standard
simplex in $\reals^n$ with faces $F_0, F_1, \ldots , F_n$ in the
notation of earlier in this paper.  Suppose $u$ solves the eigenfunction problem
\begin{equation}
  \label{E:ef-111}
 \begin{cases}
    P u = u \text{ in } T_0,
    \\
    u |_{\p T_0} = 0, \\
    \| u \|_{L^2( T_0)} = 1.
  \end{cases}
 \end{equation}
 Then on each face $F_j$, $0 \leq j \leq n$, we have
 \[
 \int_{F_j} ( h \p_{\nu_j} u ) (\nu_j^T B B^T h \p_x \bu ) dS_j = | B^T \nu_j |^2
 \int_{F_j} | h \p_{\nu_j} u |^2 dS_j,
 \]
where $dS_j$ is the induced surface measure on $F_j$ as usual.

\end{lemma}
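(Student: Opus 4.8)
The plan is to exploit the Dirichlet boundary condition to pin down the direction of the full gradient $h\p_x u$ on each face $F_j$, exactly as in the proof of Theorem~\ref{T:main}, and then read off the claimed identity by a single algebraic manipulation. Since $u$ vanishes on the hyperplane containing $F_j$, all tangential derivatives of $u$ vanish there, so $h\p_x u|_{F_j}$ is parallel to the Euclidean unit normal $\nu_j$ of $F_j$. Concretely, $h\p_x u|_{F_j} = \nu_j\, (h\p_{\nu_j} u)|_{F_j}$, which is the same observation already made for $v$ on the standard simplex in Section~\ref{S:main} (and which holds verbatim here since $u$ itself solves a Dirichlet problem on $T_0$, with $F_j \subset \{y_j=0\}$ for $1\le j\le n$ and $F_0\subset\{y_1+\dots+y_n=1\}$).

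The key step is then purely computational. Substituting $h\p_x u|_{F_j} = \nu_j (h\p_{\nu_j}u)|_{F_j}$ and $h\p_x \bu|_{F_j} = \nu_j (h\p_{\nu_j}\bu)|_{F_j}$ into the integrand gives
\[
(h\p_{\nu_j}u)\,(\nu_j^T B B^T h\p_x \bu)
= (h\p_{\nu_j}u)\,(\nu_j^T B B^T \nu_j)\,(h\p_{\nu_j}\bu)
= (\nu_j^T B B^T \nu_j)\,|h\p_{\nu_j}u|^2.
\]
Since $\nu_j^T B B^T \nu_j = (B^T\nu_j)^T(B^T\nu_j) = |B^T\nu_j|^2$, integrating over $F_j$ against $dS_j$ yields exactly
\[
\int_{F_j} (h\p_{\nu_j}u)(\nu_j^T B B^T h\p_x\bu)\,dS_j = |B^T\nu_j|^2 \int_{F_j} |h\p_{\nu_j}u|^2\,dS_j,
\]
as claimed. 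The only place any care is needed is the face $F_0$: there one must use the symmetry argument from Section~\ref{S:main} to conclude that the Dirichlet condition on $F_0 \subset \{y_1+\dots+y_n=1\}$ forces $h\p_{y_1}u = \dots = h\p_{y_n}u$ on $F_0$, so that $h\p_x u|_{F_0}$ is a scalar multiple of $(1,\dots,1)$, hence parallel to $\nu_0 = n^{-1/2}(1,\dots,1)$; once that is in hand the same two-line computation applies with $j=0$.

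I do not expect a genuine obstacle here: the lemma is essentially a restatement of the linear-algebra bookkeeping already carried out in Section~\ref{S:main}, isolated and recorded in a form convenient for the inverse-problem application. The mild subtlety is simply making sure the Dirichlet-condition-implies-normal-gradient reduction is applied correctly on every face, including the slanted face $F_0$, and that the notation $\nu_j$, $dS_j$, $h\p_{\nu_j}$ matches the conventions fixed earlier in the paper; everything else is a one-line identity for the quadratic form $B B^T$.
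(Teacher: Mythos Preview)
Your proposal is correct and follows essentially the same approach as the paper: both arguments use the Dirichlet condition to conclude $h\p_x u|_{F_j} = \nu_j\,h\p_{\nu_j}u|_{F_j}$ on every face (with the same symmetry argument for $F_0$), then reduce the integrand via $\nu_j^T BB^T\nu_j = |B^T\nu_j|^2$. There is no substantive difference between your plan and the paper's proof.
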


Note that this is a different way of computing this quantity than in \eqref{E:Neumann-trans}.

\begin{proof}

We  observe that on $F_j$, for $1 \leq j \leq n$, $h \p_{x_\ell} \bu = 0$ for
$\ell \neq j$, so that 
\[
h \p_x u = e_jh \p_{x_j} u = \nu_j h \p_{\nu_j} u.
\]
Similarly, for $j = 0$, we have on $F_0$
\begin{align*}
h \p u & = 
\left[ \begin{array}{c}
    h \p_{y_1} \\
    \vdots \\
    h \p_{y_n}
  \end{array}
  \right] u \\& = 
n^{-1/2} \left[ \begin{array}{c}
    1 \\
    \vdots \\
    1
  \end{array}
  \right] h \p_{\nu_0} u     \\
& = \nu_0 h \p_{\nu_0} u 
.
\end{align*}


Then on each face $F_j$ with normal $\nu_j$, we have
\begin{align*}
  \nu_j^T B B^T h \p \bu & = \nu_j^T B B^T \nu_j h \p_{\nu_j} \bu \\
  & = ( B^T \nu_j)^T ( B^T \nu_j)  h \p_{\nu_j} \bu \\
  & = | B^T \nu_j  |^2 h \p_{\nu_j} \bu.
\end{align*}
Hence on each face $F_j$, we have
 \[
 \int_{F_j} ( h \p_{\nu_j} u ) (\nu_j^T B B^T h \p \bu ) dS_j = | B^T \nu_j |^2
 \int_{F_j} | h \p_{\nu_j} u |^2 dS_j.
 \]
 This completes the proof.
 \end{proof}

\begin{proof}[Proof of Theorem \ref{T:Gamma-I}]

The proof proceeds by using an eigenvector diagonalization argument.
It is interesting that, although the argument uses the {\it existence}
of eigenvalues/vectors of $\Gamma$, we do not need to know them.

Let $v_1, v_2$ be orthonormal eigenvectors for $\Gamma$.  Since
$\Gamma = B B^T$ is positive definite, write $\lambda_1^2,
\lambda_2^2$ for the eigenvalues of $\Gamma$ so that $\Gamma v_j =
\lambda^2_j v_j$ for $j = 1,2$.  Let
\[
L = \left( \begin{array}{cc} \vline & \vline \\
  v_1 & v_2 \\
  \vline & \vline
\end{array}
\right),
\]
so that (since $L$ is orthogonal),
\[
L^T \Gamma L = \left( \begin{array}{cc} \lambda_1^2 & 0 \\ 0 &
  \lambda_2^2 \end{array} \right).
\]
Let us denote
\[
G = L \left( \begin{array}{cc} \lambda_1 & 0 \\ 0 &
  \lambda_2 \end{array} \right),
\]
so that $G G^T = \Gamma$.

We now change variables using the matrix $G$.  Let $T_1$ denote the
triangle spanned by the new coordinates $v_1, v_2$.  Rescaling in each
variable $v_j \mapsto \lambda_j^{-1} v_j$ gives a new triangle $T$.
Let $w(x) = u(Gx)$, so that
\begin{equation}
  \label{E:Neumann-unique}
\int_T | w |^2 dV = \int_T | u(Gx) |^2 dV = |G|^{-1} \int_{T_0} | u
|^2 dV = \frac{1}{\lambda_1 \lambda_2}.
\end{equation}
We also have $-h^2 \Delta w = w$ on $T_0$, so we can use the same
commutator argument as above to compute the mass of the Neumann data.
For $j = 1,2$, let $I_j = \int_{\lambda_j^{-1} v_j } | h \p_\nu w |^2
dS$, and $I_0 = \int_{H} | h \p_\nu w |^2 dS$ be the Neumann mass of
the function $w$ on the legs spanned by the $\lambda_j^{-1} v_j$ and
the hypotenuse $H$.  Using Theorem \ref{T:main} and
\eqref{E:Neumann-unique}, we have for $j = 1,2$ 
\begin{align*}
  I_j & = 
\left( \frac{1}{\lambda_1 \lambda_2} \right) \left( \frac{
  \text{length of } \lambda_j^{-1} v_j }{\text{area}(T)} \right) \\
& = \left( \frac{1}{\lambda_1 \lambda_2} \right) \left( \frac{
  \lambda_j^{-1} }{(\lambda_1^{-1} \lambda_2^{-1} / 2)} \right) \\
& = \frac{2}{\lambda_j}.
\end{align*}
Further,
\[
I_0 = 2 ( \lambda_1^{-2} + \lambda_2^{-2} )^{1/2}.
\]

For $j = 0,1,2$, let
\[
J_j = \int_{F_j} | h \p_\nu u |^2 dS
\]
be the Neumann mass of the original eigenfunction $u$ on the faces of $T_0$.  These
are the quantities we are assuming we know.

Using Lemma \ref{L:other-N}, that means
\begin{align}
J_j & = \frac{1}{|G^T \nu |^2 } \int_{F_j} ( h \p_\nu u ) (\nu^T G G^T h
\nabla \bu ) dS \notag \\
& = \left( \frac{1}{|G^T \nu |^2 } \right) \left( \frac{ \text{length
    of }F_j}{\text{length of }\lambda_j^{-1} v_j} \right) I_j \notag \\
& = \left( \frac{1}{|G^T \nu |^2 } \right) \lambda_j \left(
\frac{2}{\lambda_j} \right) \notag \\
& = \frac{2}{| G^T \nu |^2 } \label{E:J-to-G}
\end{align}
for $j = 1,2$, and
\[
J_0 = \frac{2 \sqrt{2}}{| G^T \nu |^2 }.
\]
We pause momentarily to recall that the normal vectors $\nu$ in the
above expressions are the normals to the original faces $F_j$, $j =
0,1,2$ on the standard triangle $T_0$.


  Recall that $\nu_1 = (-1,0)$, $\nu_2 = (0,-1)$ and $\nu_0 =
  (\sqrt{2})^{-1} ( 1,1 )$, which will help us determine the matrix
  $\Gamma$.

  Write
  \[
  \Gamma = \left( \begin{array}{cc} \Gamma_{11} & \Gamma_{12} \\
    \Gamma_{21} & \Gamma_{22} \end{array} \right).
  \]
  As $\Gamma$ is symmetric, we have $\Gamma_{12} = \Gamma_{21}$, so we
  only need to determine the three numbers $\Gamma_{11}, \Gamma_{12},$
  and $\Gamma_{22}$.

  The quantities we need to examine are all of the form $| G^T \nu_j
  |^2$, which we rewrite:
  \begin{align*}
    | G^T \nu_j |^2 & = (G^T \nu_j)^T (G^T \nu_j) \\
    & = \nu_j^T G G^T \nu_j \\
    & = \nu_j^T \Gamma \nu_j.
  \end{align*}
  
  Plugging in the $\nu_j$, $j = 0 , 1 , 2$, we have:
  \begin{align*}
    \nu_1^T \Gamma \nu_1 & =
    (-1, 0 ) \Gamma \left( \begin{array}{c} -1 \\ 0 \end{array} \right) \\
    & =    \Gamma_{11} ,
  \end{align*}
  and similarly
  \[
  \nu_2^T \Gamma \nu_2 = \Gamma_{22}.
  \]
  For $\nu_0$, we get information about the off diagonal terms as
  well:
  \begin{align}
    \nu_0^T \Gamma \nu_0 & = \frac{1}{2}(1,1) \Gamma
    \left( \begin{array}{c} 1 \\ 1 \end{array} \right)  \notag \\
      & = \frac{1}{2} (1,1) \left( \begin{array}{c} \Gamma_{11} +
        \Gamma_{12} \notag \\
        \Gamma_{21} + \Gamma_{22} \end{array} \right) \notag \\
    & = \frac{1}{2} ( \Gamma_{11} + \Gamma_{12} + \Gamma_{21} +
    \Gamma_{22} ) \notag \\
    & = \frac{1}{2} ( \Gamma_{11} + 2 \Gamma_{12} + \Gamma_{22} )
    \label{E:J-0-101}
  \end{align}
  again due to $\Gamma$ being symmetric.

  Returning now to \eqref{E:J-to-G}, we have for $j = 1,2$
  \begin{align*}
    J_j & = 
    \frac{2}{| G^T \nu_j |^2 } \\
    & = \frac{2}{\Gamma_{jj}}.
  \end{align*}
  Hence 
  \[
  \Gamma_{11} = \frac{2}{J_1}
  \]
  and similarly for $\Gamma_{22}$.  For $\Gamma_{12}$, we appeal to
  equation \eqref{E:J-0-101} to get 
  \begin{align*}
    J_0 & = 
    \frac{2 \sqrt{2}}{| G^T \nu_0 |^2 } \\
    & = \frac{2 \sqrt{2}}{\frac{1}{2} ( \Gamma_{11} + 2 \Gamma_{12} +
      \Gamma_{22} )} \\
     & = \frac{4 \sqrt{2}}{ \Gamma_{11} + 2 \Gamma_{12} + \Gamma_{22}}.
  \end{align*}
  Rearranging, we have
  \[
  \Gamma_{11} + 2 \Gamma_{12} + \Gamma_{22} = \frac{ 4 \sqrt{2}}{J_0},
  \]
  so that
  solving for $\Gamma_{12}$, we have
  \[
  \Gamma_{12} = \frac{2 \sqrt{2}}{J_0} - \frac{1}{2} ( \Gamma_{11} + \Gamma_{22}
  ).
  \]  
Plugging in the known values of $\Gamma_{11}$ and $\Gamma_{22}$, we
have
 \begin{align*}
  \Gamma_{12} & = \frac{2 \sqrt{2}}{J_0} - \frac{1}{2} \left(
  \frac{2}{J_1}  + \frac{2}{J_2} \right) 
   = \frac{2 \sqrt{2}}{J_0} - \frac{1}{J_1} - \frac{1}{J_2}.
 \end{align*}

This gives the $\Gamma_{jk}$ in terms of the known quantities $J_1,$
$J_2, $ and $J_0$, completing the proof.
 \end{proof}





\begin{remark}
  \label{R:non-unique}
It is interesting to note that the proof of Theorem \ref{T:Gamma-I}
does not uniquely determine the matrix $B$, due to rotational invariance.  Indeed, if
\[
B = \left( \begin{array}{cc} a & b \\ c & d \end{array} \right)
\]
with $a = c = d = 2^{-1/2}$ and $b = - 2^{-1/2}$, then we still have
$a^2 + c^2 = b^2 + d^2 = 1$, and $ac + bd = 0$.  Note, however, that
$B B^T = I$ in this case as well.

  \end{remark}

\subsection{Dimension 3: an example}
\label{SS:3d-ex}
The result in Theorem \ref{T:Gamma-I} is false in higher dimensions,
even for small perturbations of $I$.  
Let $T_0$ be the standard simplex in $\reals^3$, $B$ be a $3 \times 3$
non-degenerate matrix, $\Gamma = B B^T$, and $P = - \Gamma_{ij} h
\p_{x_i} h \p_{x_j}$.  Suppose $u$ solves the eigenfunction problem
\eqref{E:ef-111}.  Lemma \ref{L:other-N} still applies, with $\nu_j =
- e_j$ for $1 \leq j \leq 3$ and $\nu_0 = 3^{-1/2} ( 1,1,1)$.   For
$0< \epsilon<1$, define the matrix $B$ by 
\[
B^T = \left( \begin{array}{ccc}
  a & 0 & 0 \\
  d & (1 - \epsilon^2)^{1/2} & \epsilon \\
  \epsilon & \epsilon & (1 - \epsilon^2)^{1/2}
\end{array} \right),
\]
where
\[
d = \frac{-3 \epsilon ( 1 - \epsilon^2)^{1/2} - \epsilon^2}{(1 -
  \epsilon^2)^{1/2} + \epsilon }
\]
and
\[
a = (1 - d^2 - \epsilon^2 )^{1/2}.
\]
Observe that $B = I + \O(\epsilon)$ and 
satisfies
\[
| B^T e_1 |^2 = a^2 + d^2 + \epsilon^2 = 1,
\]
\[
| B^T e_2 |^2 = (1 - \epsilon^2) + \epsilon^2 = 1,
\]
\[
| B^T e_3 |^2 = \epsilon^2 + (1 - \epsilon^2) = 1,
\]
and
\begin{align*}
| B^T (1,1,1)^T |^2 & = a^2 + (d + (1 - \epsilon^2)^{1/2} +
\epsilon)^2 + ( 2 \epsilon + (1 - \epsilon^2 )^{1/2} )^2 \\
& = a^2 + d^2 + (1 - \epsilon^2) + \epsilon^2 + 2 d (1 -
\epsilon^2)^{1/2} + 2 d \epsilon + 2 \epsilon ( 1 - \epsilon^2)^{1/2}
\\
& \quad + 4 \epsilon^2 + (1 - \epsilon^2) + 4 \epsilon (1 - \epsilon^2
)^{1/2} \\
& = (1 - \epsilon^2 ) + (1 - \epsilon^2) + \epsilon^2 + 2d ( (1 -
\epsilon^2)^{1/2} + \epsilon) \\
& \quad + 2 \epsilon ( 1 - \epsilon^2)^{1/2} + 4 \epsilon^2 + (1 -
\epsilon^2) + 4 \epsilon ( 1 - \epsilon^2)^{1/2} \\
& = 2 - \epsilon^2 - 6 \epsilon ( 1 - \epsilon^2)^{1/2} - 2 \epsilon^2
+ 6 \epsilon ( 1 - \epsilon^2)^{1/2} + 1 + 3 \epsilon^2 \\
& = 3.
\end{align*}
These are the same values one gets from $B = I = \Gamma$, 
however $B B^T \neq I$, so these 4 numbers do not determine $\Gamma$.




\bibliographystyle{alpha}
\bibliography{HC-bib}

\end{document}